\documentclass[a4paper,11pt]{amsart}

\usepackage{amssymb}
\usepackage{amsmath}
\usepackage{amsthm,color}

\allowdisplaybreaks

\pagestyle{myheadings}\markboth{}
{}

\usepackage{amsfonts}
\usepackage{amssymb}
\usepackage{amsmath}
\usepackage{amsthm}
\usepackage{bbm}
\usepackage{verbatim}
\usepackage{url}
\usepackage[latin1]{inputenc} %Unix merkist??¦¸
\usepackage[T1]{fontenc}
% pagebackref in following package shows which pages cite each bibitem.
% But it makes LaTeX run slowly. Commented out for now.
%\usepackage[colorlinks, citecolor=blue,pagebackref,hypertexnames=false]{hyperref}
%\usepackage{pagebackref}

\textwidth =159mm
\textheight =235mm
\oddsidemargin 2mm
\evensidemargin 2mm
\headheight=13pt
\setlength{\topmargin}{-0.6cm}

% AMS-paketteja, EKA ON OMA
\usepackage{amsfonts}
\usepackage{amsmath,amssymb,amsthm}
%,stmaryrd}

%\usepackage[colorlinks, citecolor=blue,pagebackref,hypertexnames=false]{hyperref}
%\usepackage[colorlinks, citecolor=blue,hypertexnames=false]{hyperref}

\usepackage{mathrsfs}
\usepackage{verbatim}

% grafiikka paketti
\usepackage{graphicx}
\usepackage{ifpdf}

% Next is an attempt to add running heads with the title and author
% names; but it produced only the author names (on every page).
% I abandoned the attempt.

%\pagestyle{myheadings}
%\markboth{Ap from dyadic Ap}{Jill Pipher and Xiao Xiao}

% Double spacing for LMS Journal
%\renewcommand{\baselinestretch}{1.5}

%\topmargin -0.5in %
%\oddsidemargin -0.0in %
%\evensidemargin -0.0in %
%\textheight 10in %
%\textwidth 6.5in %

% \parskip3mm

%\setcounter{tocdepth}{1}

%\newtheorem{thm}{Theorem}[section]
%\newtheorem{prop}[thm]{Proposition}
%\newtheorem{lem}[thm]{Lemma}
%\newtheorem{cor}[thm]{Corollary}
%\newtheorem{lemma}[subsection]{Lemma}

\newtheorem{thm}{Theorem}[section]

\newtheorem{lem}[thm]{Lemma}

\theoremstyle{definition} %makes theorem-like environments
% defined after it
% appear with the text in upright font (not italics)

\newtheorem{defn}[thm]{Definition}

%\newtheorem{thm}{Theorem}
%\newtheorem{lem}{Lemma}
%\newtheorem{prop}{Proposition}
%\newtheorem{cor}{Corollary}

%\theoremstyle{definition} %makes theorem-like environments
% defined after it
% appear with the text in upright font (not italics)
%\newtheorem{defn}{Definition}
%\newtheorem{ex}{Exercise}
%\newtheorem{exa}{Example}
%\newtheorem{asi}{Aside}

%%%%%

\newcommand{\pa}{\varphi^t}

\newcommand{\R}{\mathbb{R}}

%{\mathcal{D}}

%\DeclareMathOperator{\supp}{supp}
%\DeclareMathOperator{\esssup}{ess\,sup}
%\DeclareMathOperator{\essinf}{ess\,inf}

\numberwithin{equation}{section}

\def\rr{{\mathbb R}}

\def\zz{{\mathbb Z}}
\def\nn{{\mathbb N}}

\def\cm{{\mathcal M}}

\def\fz{\infty}
\def\az{\alpha}
\def\supp{{\mathop\mathrm{\,supp\,}}}

\def\lz{\lambda}

\def\vz{\varphi}

\def\pa{\partial}
\def\wz{\widetilde}

%%%%%%%%%%%%%%%%%%%%%%%%%%%%%%%%%%%%%%%%%%%%%%%%%%%%%%%%%%%%%%%%%%%%%%%%%%%%%%%%%%%%%%%%%%%%%%%%%%%%%%%%%%%

\def\prz{{\partial}}
\def\gratx{{\nabla_{t,\,x}}}

\def\plz{{P^{[\lz]}_t}}

\def\prz{{\partial}}

%\def\hnz{{H^p_{\cn^\bz_{h_1,\,h_2}}(\rlz)}}
%\def\hnzo{{H^1_{\cn^\bz_{h_1,\,h_2}}(\rlz)}}
%\def\hrz{{H^p_{\crz_{h_1,\,h_2}}(\rlz)}}
%\def\hrzo{{H^1_{\crz_{h_1,\,h_2}}(\rlz)}}
%\def\hrp{{H^p_{\crz_{P_1,\,P_2}}(\rlz)}}
%\def\hrpo{{H^1_{\crz_{P_1,\,P_2}}(\rlz)}}
%\def\hnp{{H^p_{\cn^\bz_{P_1,\,P_2}}(\rlz)}}
%\def\hnpo{{H^1_{\cn^\bz_{P_1,\,P_2}}(\rlz)}}

%%%%%%%%%%%%%%%%%%%%%%%%%%%%%%%%%%%%%%%%%%%%%%%%%%%%%%%%%%%%%%%%%%%%%%%%%%%

\def\ls{\lesssim}
\def\gs{\gtrsim}

\def\tbz{{\triangle_\lz}}
\def\dmz{{dm_\lz}}

\def\riz{{R_{\Delta_\lz}}}

\def\plz{{P^{[\lz]}_t}}

\def\lpz{{L^p(\rr_+,\, dm_\lz)}}

\def\dfrac{\displaystyle\frac}

\def\r{\right}
\def\lf{\left}

\def\noz{\nonumber}

%%%Anna's newcommands

%\newcommand{\loc}[0]{\operatorname{loc}}

%%%%end Anna's commands

%\DeclareMathOperator{\supp}{supp}

% Definitions of \Xint#1, \XXint#1#2#3, \ddashint, \dashint
% by Claire Connelly, HMC.

\def\XXint#1#2#3{{\setbox0=\hbox{$#1{#2#3}{\int}$}
     \vcenter{\hbox{$#2#3$}}\kern-.5\wd0}}

\allowdisplaybreaks
\arraycolsep=1pt

\begin{document}

\title{A Moser type inequality for Bessel Laplace equations and applications}

\author{Xuan Thinh Duong}
\address{Xuan Thinh Duong, Department of Mathematics\\
         Macquarie University\\
         NSW 2019\\
         Australia
         }
\email{xuan.duong@mq.edu.au}

\author{Zihua Guo}
\address{Zihua Guo, School of Mathematical Sciences\\
         Monash University\\
         VIC, Australia
         }
\email{zihua.guo@monash.edu}

\author{Ji Li}
\address{Ji Li, Department of Mathematics\\
         Macquarie University\\
         NSW 2019\\
         Australia
         }
\email{ji.li@mq.edu.au}

%
%\author{Yumeng Ou}
%\address{Department of Mathematics\\
%         Brown University\\
%         Providence, RI 02912, USA
%         }
%\email{yumeng.ou@brown.edu}
%

%\author{Brett D. Wick}
%\address{Brett D. Wick, Department of Mathematics\\
%         Washington University -- St. Louis\\
%         St. Louis, MO 63130-4899 USA
%         }
%\email{wick@math.wustl.edu}

\author{Dongyong Yang}
\address{Dongyong Yang, School of Mathematical Sciences\\
 Xiamen University\\
  Xiamen 361005,  China
  }
\email{dyyang@xmu.edu.cn }

%    \thanks will become a 1st page footnote.
%\thanks{}

%\thanks{Date: \today}

%    General info
\subjclass[2010]{31C05, 35J05, 42B25, 42B30}
% 42B35  Function spaces arising in harmonic analysis
% 42C40  Wavelets and other special systems
% 30L99  Analysis on metric spaces: None of the above,
%        but in this section
% 42B30  $H^p$-spaces
% 42B25  Maximal functions, Littlewood-Paley theory

%\date{May 28, 2013}
\date{\today}

%\dedicatory{This paper is dedicated to\ldots}

\keywords{Bessel operator, harmonic function, Moser inequality}

\begin{abstract}
In this paper, we study Bessel operators and Bessel Laplace equations studied by Weinstein, Huber, and related the harmonic function theory introduced by Muckenhoupt--Stein.  We establish the Moser type inequality for these harmonic functions, which is missing in this setting before. We then apply it to give a direct proof  for the equivalence of characterizations of the Hardy spaces associated to Bessel operator via non-tangential maximal function and  radial maximal function defined in terms of the Poisson semigroup.
\end{abstract}

\maketitle

%\tableofcontents

%%%%%%%%%%%%%%%%%
% INTRODUCTION
%%%%%%%%%%%%%%%%%

%----------------------------------------------------------

\section{Introduction and statement of main results}
\label{sec:introduction}
\setcounter{equation}{0}

%\color{red}
%Recall the results on Euclidean setting: (for Brett to finalise)
%\color{black}

The study of harmonic functions in different spaces and domains has played an important role in harmonic analysis and partial differential equations. For  harmonic functions, one can list the maximum principle, Harnack's inequality among their important properties. Another useful feature is the so-called  Moser inequality that we remind the reader now.
Suppose $u(t,x)$ is a harmonic function on $\mathbb{R}_+^{n+1}$, i.e.,
$$\Big( {\pa^2\over\pa t^2} + {\pa^2\over\pa x_1^2}+\cdots+ {\pa^2\over\pa x_n^2}\Big) u(t,x) =0,\quad x:=(x_1,\ldots,x_n)\in\mathbb{R}^n, t>0,$$
then we have that for any $0<p<\infty$, there exists a positive constant $C_{n,p}$ depending only on $n$ and $p$ such that
$$ |u(t_0,x_0)|\leq C_{n,p} \bigg({1\over |B|} \int_B |u(t,x)|^pdtdx \bigg)^{1\over p}  $$
for any $(t_0,x_0)\in \mathbb{R}_+^{n+1}$ and any ball $B\subset \mathbb{R}_+^{n+1}$ centered at $(t_0,x_0)$.
C. Fefferman and E. Stein  \cite[Section 9, Lemma 2]{FS} first proved this inequality for harmonic functions, using the Poisson representation on the sphere (note that they also pointed out that this result is essentially due to Hardy and Littlewood \cite{HaLi}). Later, Han and Lin \cite{hl97} reproved this Moser inequality on $\mathbb{R}^n$ for $n\geq 3$, where they used the properties of the fundamental solution of the Laplace equation and the construction of suitable test functions to obtain a version of this type of inequality for $p=2$, and then, by the standard iteration approach, they proved it for general $p\in (0,\infty)$.

In 1965, B. Muckenhoupt and E. Stein in \cite{ms} introduced the harmonic function theory associated with Bessel operator $\tbz$, defined by
setting for suitable functions $f$,
\begin{equation*}
\tbz f(x):=\frac{d^2}{dx^2}f(x)+\frac{2\lz}{x}\frac{d}{dx}f(x),\quad \lz>0,\quad x\in \R_+:=(0,\fz).
\end{equation*}
The associated Bessel Laplace equation given by
\begin{equation}\label{bessel laplace equation}
\triangle_{t,\,x} (u) :=\pa_{t}^2u + \pa_{x}^2u+\frac{2\lz}{x}\pa_{x}u=0
\end{equation}
was studied by A. Weinstein \cite{w}, and A. Huber \cite{Hu} in higher dimension. In these works, they considered the generalised auxially symmetric potentials, and obtained the properties of the solutions of this equation, such as the extension, the uniqueness theorem, and the boundary value problem for certain domains.

If $u\in C^2(\R_+\times\R_+)$ is a solution of \eqref{bessel laplace equation} then $u$ is said to be $\lambda$-harmonic. The function $u$ and its conjugate (denoted by $v$) satisfy the following Cauchy--Riemann type equations
\begin{align}\label{CR}
\pa_{x}u=-\pa_{t}v\ \ {\rm and\ \ }
\pa_{t}u =\pa_{x}v + {2\lambda\over x} v\ \ {\rm in\ \ }\mathbb{R}_+\times \R_+.
\end{align}

In \cite{ms} they developed a theory of functions in the setting of
$\tbz$ which parallels the classical one associated to the standard Laplacian, where results on $\lpz$-boundedness of conjugate
functions and fractional integrals associated with $\tbz$ were
obtained for $p\in[1, \fz)$ and $\dmz(x):= x^{2\lz}\,dx$.
We also point out that Haimo \cite{h} studied the Hankel convolution transforms $\vz \sharp_\lz f$ associated with the Hankel transform in the Bessel setting systematically,
which provides a parallel theory to the classical convolution and Fourier transforms.  Also note that
the Poisson integral of $f$ studied in \cite{ms} is the Hankel convolution of Poisson kernel with $f$, see \cite{bdt}.

Since then, many problems based on the Bessel context were studied, such as the boundedness of Bessel Riesz transform,
Littlewood--Paley g-functions, Hardy and BMO spaces associated with Bessel operators, $A_p$ weights associated with Bessel operators
(see, for example, \cite{k78,ak,bfbmt,v08,bfs,bhnv,bcfr,yy,dlwy,DLMWY,dlwy2} and the references therein).

%\smallskip
\vskip.15cm

The aim of this paper is to give a positive answer to the open question whether the Moser inequality is true for $\lambda$-harmonic functions.  We will prove the Moser inequality in Section 2, then apply it to show the equivalence of
characterizations of the Hardy spaces associated to Bessel operator via non-tangential maximal function and radial maximal function.

To be more precise, for any $(t_0, x_0)\in (\R_+\cup\{0\})\times(\R_+\cup\{0\})$ and $R>0$, we define the ball $B((t_0,\, x_0),\, R)$ as follows
\begin{align}\label{ball}
B((t_0,\, x_0),\, R):=\lf\{(t, x)\in\R_+\times\R_+: (t-t_0)^2+(x-x_0)^2<R^2\r\}.
\end{align}
We also define the measure of these balls as 
$\tilde m_\lz(B((t_0,\, x_0),\, R)):= \int_{B((t_0,\, x_0),\, R)} 1dt x^{2\lambda} dx.$

Our main result  is the following.
\begin{thm}\label{t-moser inequ} Suppose $u\in C^2(\R_+\times\R_+)$ and $u$ is a solution of \eqref{bessel laplace equation}.
Let $p\in(0, \fz)$. Then there exists a positive constant $C_{p,\,\lz}$ such that for any $(t_0, x_0)\in\R_+\times\R_+$ and $R>0$, we have
\begin{align}\label{sub har inequ}
\sup_{(t,\,x)\in B((t_0,\,x_0),\,R)}| u(t, x)|\le \lf[\dfrac{ C_{p,\,\lz}}{\tilde m_\lz(B((t_0, x_0), 12R))}\iint_{B((t_0,\, x_0),\, 12R)}| u(t, x)|^px^{2\lz}dx\,dt\r]^{1/p}.
\end{align}
\end{thm}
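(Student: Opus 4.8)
\smallskip
\noindent\emph{A proof proposal.} The plan is to prove \eqref{sub har inequ} first in the case $p=2$, with the factor $12R$ replaced by a small fixed multiple of $R$, and then to recover every $p\in(0,\fz)$ and the stated geometry by a routine iteration. Write $B_r:=B((t_0,x_0),r)$ and $d\tilde m_\lz:=x^{2\lz}\,dx\,dt$. The one structural fact I would use throughout is that, after multiplying \eqref{bessel laplace equation} by $x^{2\lz}$, a $\lz$-harmonic $u$ is, on $\{x>0\}$, a classical solution of the divergence-form equation $\mathrm{div}(x^{2\lz}\nabla u)=0$; consequently $|u|^s$ is, for every $s\ge1$, a weak subsolution of $\mathrm{div}(x^{2\lz}\nabla\,\cdot\,)\ge0$. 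I would also use that $(\R_+\times\R_+,|\cdot|,d\tilde m_\lz)$ is a doubling space, with $\tilde m_\lz(B_r)\simeq r^{2}(x_0+r)^{2\lz}$ uniformly in $(t_0,x_0,r)$.

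\smallskip
\noindent\emph{Step 1 (from $p=2$ to all $p$).} I would first establish a fixed $N\le12$ and a constant $C=C_\lz$ with
\begin{align}\label{plan-L2}
\sup_{B_R}|u|^{2}\le \frac{C}{\tilde m_\lz(B_{NR})}\iint_{B_{NR}}|u|^{2}\,d\tilde m_\lz
\end{align}
for every $\lz$-harmonic $u$ and every ball, and then deduce \eqref{sub har inequ} from it. For $p\ge2$ this is immediate from \eqref{plan-L2} by Jensen's inequality for $s\mapsto s^{p/2}$ against the probability measure $\tilde m_\lz(B_{NR})^{-1}\,d\tilde m_\lz$. For $0<p<2$ I would run the Hardy--Littlewood/Han--Lin bootstrap recalled in the introduction: substituting $|u|^{2}\le(\sup_{B_{NR}}|u|)^{2-p}|u|^{p}$ into \eqref{plan-L2} gives
\begin{align*}
\sup_{B_R}|u|^{2}\le C\Big(\sup_{B_{NR}}|u|\Big)^{2-p}\frac{1}{\tilde m_\lz(B_{NR})}\iint_{B_{NR}}|u|^{p}\,d\tilde m_\lz,
\end{align*}
and iterating this over a geometric sequence of radii, using the doubling of $\tilde m_\lz$ to compare the normalising measures, absorbs the factor $\sup_{B_{NR}}|u|$ into the left-hand side at the price of one more fixed dilation; arranging the cumulative dilation to be at most $12$ yields \eqref{sub har inequ}.

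\smallskip
\noindent\emph{Step 2 (the $L^{2}$ estimate \eqref{plan-L2}).} If the ball stays away from the degenerate wall $\{x=0\}$, say $x_0\ge 30R$, then the rescaling $(t,x)=(t_0+R\tau,x_0+R\xi)$ turns \eqref{bessel laplace equation} into a uniformly elliptic equation whose coefficients are bounded in $C^{1}$ independently of $(R,x_0)$, while $x\simeq x_0$ on $B_{12R}$ makes $d\tilde m_\lz$ comparable to Lebesgue measure with absolute constants; \eqref{plan-L2} then follows from the standard local boundedness (subsolution) estimate for such equations --- the De Giorgi--Nash--Moser extension of the harmonic Moser inequality quoted above --- with constant $C_\lz$. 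If instead $x_0<30R$, so that $B_{12R}$ may touch the wall, I would run Moser's iteration directly for $\mathrm{div}(x^{2\lz}\nabla u)=0$: the Caccioppoli inequality $\iint\eta^{2}|\nabla u|^{2}\,d\tilde m_\lz\le 4\iint|\nabla\eta|^{2}|u|^{2}\,d\tilde m_\lz$ together with the scale-invariant weighted Sobolev--Poincar\'e inequality for $d\tilde m_\lz$ on balls drives the usual reverse-H\"older iteration and produces \eqref{plan-L2}; balls with $x_0$ of intermediate size are then reduced to finitely many interior balls of the first case by a short Harnack-type chain, and this chain is the only place where the enlargement from radius $R$ up to $12R$ is genuinely spent. (The non-degenerate $t$-direction requires nothing special: one keeps $t$ free in the cut-off functions, or reflects evenly in $t$.)

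\smallskip
\noindent\emph{The main obstacle.} Step 1 and the interior part of Step 2 are routine. The genuinely delicate point is the near-wall case of Step 2: since $x^{2\lz}$ is not a Muckenhoupt $A_2$ weight --- equivalently $d\tilde m_\lz$ is not Ahlfors regular --- near $\{x=0\}$, and since the drift $2\lz/x$ is unbounded there, one must verify by hand that the Caccioppoli estimate, the weighted Sobolev--Poincar\'e inequality, the iteration exponents and constants, and the Harnack chain are all scale invariant and uniform in the position of the ball relative to $\{x=0\}$ (and to $\{t=0\}$), so that the final constant depends only on $p$ and $\lz$. This is precisely where the good structure of the Bessel setting enters --- the sign of the drift for $\lz>0$, the regularity of $\lz$-harmonic functions up to the wall $\{x=0\}$, and the exact form of $d\tilde m_\lz$ --- and it is the heart of the argument.
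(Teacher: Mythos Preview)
Your interior case ($x_0\gg R$) and the bootstrap from $p=2$ down to $p\in(0,2)$ are essentially the paper's argument: the paper also proves a Caccioppoli inequality, passes to an $L^2$ sup-estimate, and then runs the Han--Lin iteration. One technical caveat: your Step~1 iteration, as written, dilates by a fixed factor $N$ at each stage, which sends the radii to infinity. What the paper (and the standard trick) actually uses is an $L^2$ estimate of the form $\sup_{B_{\alpha r}}|u|\lesssim \tilde m_\lz(B_{(1-\alpha)r})^{-1/2}\|u\|_{L^2(B_r)}$ for \emph{arbitrary} $\alpha\in(0,1)$, then iterates over radii $r_0<r_1<\cdots\to r_\infty<\infty$ with $r_{j+1}-r_j$ a geometric sequence. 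De Giorgi--Nash--Moser does give you this flexible form, so the issue is in your write-up rather than in substance.

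The genuine divergence is the near-wall case, and here the two routes are quite different. The paper does \emph{not} attempt a weighted Moser iteration near $\{x=0\}$; instead it exploits a feature specific to the Bessel setting: the explicit Poisson representation of $\lz$-harmonic functions on half-discs $B((t_0,0),r)$, due to Muckenhoupt--Stein, together with their maximum principle. With that kernel bound in hand, the paper follows Fefferman--Stein's spherical-means argument (the $m_p(r)$, $m_\infty(r)$ machinery), avoiding any weighted Sobolev or Poincar\'e inequality near the degenerate boundary. Your proposal, by contrast, would run Caccioppoli plus a scale-invariant weighted Sobolev--Poincar\'e for the weight $x^{2\lz}$ on balls touching $\{x=0\}$; you are right that establishing this uniformly is the crux, and you do not do it. (Note also that $x^{2\lz}$ is $A_2$ only for $\lz<1/2$, so Fabes--Kenig--Serapioni does not apply off the shelf; one would need the monomial-weight Sobolev theory.)

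What each route buys: the paper's Poisson-representation argument is complete and short precisely because it uses the special structure of \eqref{bessel laplace equation}, whereas your Moser-iteration scheme would be more robust---it would extend to perturbations of the Bessel Laplacian lacking an explicit Poisson kernel---but at the cost of supplying the weighted Sobolev--Poincar\'e inequality you flag as the main obstacle.
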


As we mentioned earlier, in the classical case, there are two different approaches to prove the Moser inequality for harmonic functions: (i) via Poisson representation (\cite{FS}), (ii) via fundamental solutions of the Laplace equation and the iteration (\cite{hl97}).  

However, we point out that in this Bessel setting, none of these two methods can be applied directly. One of the main difficulties here is that for a given ball $B((t_0, x_0), R)$,
the measure $\tilde m_\lz(B((t_0, x_0),R))$ depends on both its radius and center, i.e., $\tilde m_\lz$ is not translation invariant with respect to $x$.
Thus, for the first approach, note that for the Poisson representation for the $\lambda$-harmonic functions $u$ (i.e., $u$ satisfies \eqref{bessel laplace equation}), it is only known for the points $(0,0)$ or $(t,0)$, but not known when $x\not=0$. Hence, this approach is not applicable when we consider the points $(t,x)$ with $x>t$. For the second approach, we first point out that the ``homogeneous dimension'' with respect to the measure $\tilde m_\lz$ is $2\lz+2$, which is not the standard one. Moreover, the idea of constructing some suitable Schwarz functions does not work in this setting since the measure $\tilde m_\lz$ depends on both the radius and the center of the ball.

To prove our main theorem, we need to combine some of the ideas of these two approaches, together with our new observation of applying the Sobolev embedding theorems. To be more precise, we observe that the equation \eqref{bessel laplace equation} is translation invariant with respect to $t$ but not $x$. Thus, it is natural to compare the radius $R$ with the coordinate $x_0$ of the centre. We will consider the following two cases:  Case (i): $R\leq x_0/4$, and Case (ii): $R> x_0/4$.

To handle Case (i), we first establish the Caccioppoli inequality for the $\lambda$-harmonic function $u$, then establish a suitable version of 
Sobolev embedding theorem in this setting to obtain the required estimates for the partial derivatives of $u$. Combining these estimates together, 
we obtain an $L^2$ version of the Moser type inequality for  $u$, which implies \eqref{sub har inequ} by using the standard iteration approach.

To prove Case (ii), noting that in this case, $R> x_0/4$, we can consider a larger ball centered at $(t_0,0)$ with radius $5R$, which contains the ball $B((t_0,x_0),R)$ as defined in \eqref{ball} and with comparable measures. Then we apply the Poisson representation for the $\lambda$-harmonic functions, and follows the idea of  C. Fefferman and E. Stein  \cite[Section 9, Lemma 2]{FS} to obtain  \eqref{sub har inequ}.

%r\quad

As one of the applications, we can prove directly that the $L^p$ norms of the non-tangential maximal function and radial maximal function of the Poisson integral of $f$ are equivalent. For more details, we refer the reader to Section 3.

Throughout the paper, for every interval $I\subset \R_+$, we denote it by $I:=I(x,t):= (x-t,x+t)\cap \R_+$. The measure of $I$ is defined as
$m_\lz(I(x,t)):=\int_{I(x,\,t)} x^{2\lz} dx$.

\section{Proof of Theorem \ref{t-moser inequ} }
\label{sec:proof}
\setcounter{equation}{0}

To begin with, we consider two cases:\ \   Case (i): $R\leq x_0/4$, and Case (ii): $R> x_0/4$.

We now consider Case (i). We claim that 
when $R\leq x_0/4$, we have
\begin{align}\label{sub har inequ 1}
\sup_{(t,\,x)\in B((t_0,\,x_0),\,R)}| u(t, x)|\le \lf[\dfrac{ C_{p,\,\lz}}{\tilde m_\lz(B((t_0, x_0), 2R))}\iint_{B((t_0,\, x_0),\, 2R)}| u(t, x)|^px^{2\lz}dx\,dt\r]^{1/p}.
\end{align}

Note that in this case, we have $2R\leq x_0/2$, then for $(t,x)\in B((t_0,x_0), 2R)$ we have $x\sim x_0$ and thus \[\tilde m_\lz(B((t_0, x_0), 2R))\sim x_0^{2\lambda}R^2.\] To handle this case, we will establish the Caccioppoli inequality, Sobolev embedding theorem to obtain an $L^2$ version of the Moser type inequality for $p=2$, and then using an iteration approach (see for example \cite{hl97}) to obtain the general case for $p\in(0,\infty)$.

We first  establish the Caccioppoli inequality in this Bessel setting as follows.

\begin{lem}[Caccioppoli inequality]\label{l-cacci lemma}
Let $(t_0, x_0)\in\R_+\times\R_+$ and $R\in(0, \fz)$. Then there exists a
positive constant $C$, independent of $(t_0,x_0)$,
$R$ and $ u$, such that
\begin{equation*}
\iint_{B((t_0,\,x_0),\,R)}|\gratx  u(t,x)|^2
\,\dmz(x)\,dt\le \frac C{R^2}
\iint_{B((t_0,\,x_0),\,2R)}|u(t,x)|^2\,\dmz(x)\,dt.
\end{equation*}
\end{lem}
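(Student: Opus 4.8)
The plan is to prove the Caccioppoli inequality by the standard energy method, adapted to the weighted measure $\dmz(x)=x^{2\lz}\,dx$. The key point is that the Bessel Laplacian $\tbz=\pa_x^2+\frac{2\lz}{x}\pa_x$ is the natural divergence-form operator for this weight, namely $x^{-2\lz}\pa_x(x^{2\lz}\pa_x\,\cdot\,)$, so that $\triangle_{t,\,x}u=0$ means $\pa_t(x^{2\lz}\pa_t u)+\pa_x(x^{2\lz}\pa_x u)=0$ in the sense of the weighted divergence operator. Because $(t_0,x_0)\in\R_+\times\R_+$ and the balls in \eqref{ball} are defined with the extra constraint that their points lie in $\R_+\times\R_+$, on $B((t_0,x_0),2R)$ the weight $x^{2\lz}$ is a smooth, strictly positive, bounded function (though with constants that may depend on $x_0$), so there are no issues with the degeneracy of the weight at $x=0$ inside the region of integration.

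First I would fix a cutoff function $\eta\in C_c^\infty(\R_+\times\R_+)$ with $\eta\equiv 1$ on $B((t_0,x_0),R)$, $\supp\eta\subset B((t_0,x_0),2R)$, $0\le\eta\le 1$, and $|\gratx\eta|\le C/R$; such $\eta$ exists (e.g. a rescaled radial bump in the Euclidean distance). Next I would multiply the equation $\pa_t(x^{2\lz}\pa_t u)+\pa_x(x^{2\lz}\pa_x u)=0$ by $\eta^2 u$ and integrate over $\R_+\times\R_+$. Integrating by parts in $t$ and in $x$ (the boundary terms vanish since $\eta$ is compactly supported in the open quadrant), one obtains
\[
\iint \eta^2\,|\gratx u|^2\,\dmz(x)\,dt = -2\iint \eta\,u\,\gratx\eta\cdot\gratx u\,\dmz(x)\,dt.
\]
Then I would apply the Cauchy--Schwarz inequality followed by the elementary inequality $2ab\le \e a^2+\e^{-1}b^2$ with a suitably small $\e$ to absorb the term $\iint\eta^2|\gratx u|^2\,\dmz(x)\,dt$ into the left-hand side, leaving
\[
\iint \eta^2\,|\gratx u|^2\,\dmz(x)\,dt \le C\iint |\gratx\eta|^2\,|u|^2\,\dmz(x)\,dt \le \frac{C}{R^2}\iint_{B((t_0,x_0),2R)}|u|^2\,\dmz(x)\,dt,
\]
and since $\eta\equiv1$ on $B((t_0,x_0),R)$ this gives the claimed bound, with a constant independent of $(t_0,x_0)$, $R$ and $u$.

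The only subtlety — and the step I would be most careful about — is justifying the integration by parts, i.e. checking that the boundary contributions genuinely vanish and that all integrals converge. Since $u\in C^2(\R_+\times\R_+)$ and $\eta$ has compact support inside the open quadrant $\R_+\times\R_+$, the integrands are continuous and compactly supported, so this is legitimate and there is no boundary term at $\{x=0\}$ or at infinity; the divergence-form rewriting $\triangle_{t,x}u = x^{-2\lz}[\pa_t(x^{2\lz}\pa_t u)+\pa_x(x^{2\lz}\pa_x u)]$ is valid on $\supp\eta$ because $x$ is bounded away from $0$ there. One should also note that the constant is genuinely uniform in the center: although $x^{2\lz}$ varies over $\supp\eta$, in the multiplication-and-integration argument the weight $\dmz(x)$ appears on both sides in exactly the same way, so it never needs to be estimated pointwise — the uniformity is automatic. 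This is what makes the Caccioppoli step clean even though, as emphasized in the introduction, $\tilde m_\lz$ is not translation invariant in $x$.
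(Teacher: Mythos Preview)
Your proposal is correct and follows essentially the same approach as the paper: choose a cutoff $\eta$ with $\eta\equiv1$ on the inner ball, $\supp\eta\subset B((t_0,x_0),2R)$, $|\gratx\eta|\ls R^{-1}$; multiply the equation by $\eta^2 u$; integrate by parts against $\dmz(x)\,dt$ (using that $[\pa_x^2+\tfrac{2\lz}{x}\pa_x]$ is in divergence form with respect to $x^{2\lz}\,dx$); and absorb via Cauchy--Schwarz. Your explicit observation that the weight appears identically on both sides, so the constant is automatically uniform in $(t_0,x_0)$, is a nice clarification of why no pointwise comparison of $x^{2\lz}$ is needed.
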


\begin{proof}
Choose a function $\eta\in C^\fz_c(\R_+\times\R_+)$ with $\supp(\eta)\subset
B((t_0,x_0), 2R)$, $0\le\eta\le1$,
$\eta\equiv1$ on $B((t_0,x_0), R)$ and
$|\gratx \nabla\eta|\ls R^{-1}$. Define $\vz:=\eta^2 u$.  By \eqref{bessel laplace equation}, we see that
\begin{eqnarray*}
0&&= \iint_{B((t_0,\,x_0),\,2R)}\lf[\prz_t^2  u(t,x)+\prz_x^2  u(t,x)+\frac{2\lz}x\prz_x  u(t,x)\r]\overline{\varphi(t,x)}x^{2\lz}\,dx\,dt\\
&&= \iint_{B((t_0,\,x_0),\,2R)}\lf[\prz_x^2  u(t,x)+\frac{2\lz}x\prz_x  u(t,x)\r]\overline{\eta^2(t,x) u(t,x)}x^{2\lz}\,dx\,dt\\
&&\quad+ \iint_{B((t_0,\,x_0),\,2R)}\prz_t^2  u(t,x)\overline{\eta^2(t,x) u(t,x)}x^{2\lz}\,dx\,dt\\
&&=-\iint_{B((t_0,\,x_0),\,2R)}\prz_x  u(t,x)\overline{\prz_x  u(t,x)\eta^2+ 2 u(t,x)\eta\prz_x \eta} x^{2\lz}\,dx\,dt\\
&&\quad- \iint_{B((t_0,\,x_0),\,2R)}\prz_t  u(t,x)\overline{\prz_t  u(t,x)\eta^2+ 2 u(t,x)\eta\prz_t \eta}x^{2\lz}\,dx\,dt.
\end{eqnarray*}
Applying the Cauchy-Schwarz inequality to each term in the right-hand side above, and then adding them up, we obtain  that
\begin{eqnarray*}
 &&\iint_{B((t_0,\,x_0),\,2R)} |\gratx  u(t,x)|^2\eta^2\,x^{2\lz}\,dx\,dt\\
 &&\quad\le2\iint_{B((t_0,\,x_0),\,2R)} \lf[\frac14|\gratx  u(t,x)|^2\eta^2+4| u(t,x)|^2\lf|\gratx \eta\r|^2\r]\,x^{2\lz}\,dx\,dt.
\end{eqnarray*}
Combining this with the property $\eta\equiv1$ on $B((t_0,x_0), R)$, we further deduce that
\begin{eqnarray*}
 \iint_{B((t_0,\,x_0),\,R)} |\gratx  u(t,x)|^2\,x^{2\lz}\,dx\,dt
&\ls& \iint_{B((t_0,\,x_0),\,2R)}
| u(t,x)|^2\lf|\gratx \eta\r|^2\,x^{2\lz}\,dx\,dt\\
&\ls&\frac1{R^2}\iint_{B((t_0,x_0),\, 2R)} | u(t,x)|^2\,x^{2\lz}\,dx\,dt.
\end{eqnarray*}
This finishes the proof of Lemma \ref{l-cacci lemma}.
\end{proof}

\begin{lem}[Sobolev Embedding]\label{embedding lemma}
Let $(t_0, x_0)\in\R_+\times\R_+$ and $R\in(0, \fz)$. Then for the ball $B((t_0, x_0), R)$, and for $f\in C^2(\R+\times\R_+)$ we have
\begin{align*}
\|f\|_{L^\infty(B((t_0, x_0), R))}&\ls \Big( {1\over R^2} \int_{B((t_0, x_0), R))} |f(t,x)|^2dtdx \Big)^{1\over 2}+ \Big(  \int_{B((t_0, x_0), R))} |\nabla_{t,x} f(t,x)|^2dtdx \Big)^{1\over 2}\\
&\quad +\Big( R^2 \int_{B((t_0, x_0), R))} |\nabla_{t,x}^2f(t,x)|^2dtdx \Big)^{1\over 2}.
\end{align*}
\end{lem}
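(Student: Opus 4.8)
The plan is to reduce this to the classical Euclidean Sobolev embedding $W^{2,2}\hookrightarrow L^\infty$ in dimension $2$ (the variables being $(t,x)$), which is exactly the borderline case $2 > d/2$ with $d=2$, by a rescaling argument that tracks the powers of $R$. First I would observe that the measure $x^{2\lambda}\,dx\,dt$ is irrelevant to this lemma: the statement is written entirely with Lebesgue measure $dt\,dx$, so the Bessel weight plays no role here and we work on the planar ball $B:=B((t_0,x_0),R)\subset\R_+\times\R_+$, viewed as a Euclidean disk of radius $R$.

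The main steps, in order, are as follows. \textbf{Step 1 (rescaling to the unit disk).} Set $g(s,y):=f(t_0+Rs,\,x_0+Ry)$ for $(s,y)$ in the unit disk $B_1:=B((0,0),1)$. Then $\nabla g = R\,(\nabla f)(t_0+Rs,x_0+Ry)$ and $\nabla^2 g = R^2\,(\nabla^2 f)(\cdots)$, while the change of variables gives $dt\,dx = R^2\,ds\,dy$. Hence
$$\int_{B_1}|g|^2 = \frac1{R^2}\int_B|f|^2,\quad \int_{B_1}|\nabla g|^2 = \int_B|\nabla f|^2,\quad \int_{B_1}|\nabla^2 g|^2 = R^2\int_B|\nabla^2 f|^2,$$
and $\|g\|_{L^\infty(B_1)}=\|f\|_{L^\infty(B)}$. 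So the claimed inequality is exactly the $R$-independent estimate $\|g\|_{L^\infty(B_1)}\ls \|g\|_{W^{2,2}(B_1)}$, with the three terms on the right matching the three terms in the statement. \textbf{Step 2 (classical embedding on the unit disk).} Since $B_1$ is a bounded Lipschitz domain in $\R^2$ and $W^{2,2}(B_1)\hookrightarrow C(\overline{B_1})\hookrightarrow L^\infty(B_1)$ with a bound depending only on the domain, we conclude $\|g\|_{L^\infty(B_1)}\le C\|g\|_{W^{2,2}(B_1)}$. \textbf{Step 3 (undo the rescaling).} Substituting the identities from Step 1 recovers precisely \eqref{embedding lemma}, with $C$ absolute (in particular independent of $(t_0,x_0)$ and $R$). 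One should note that if $B$ touches or is cut by the boundary $\{x=0\}$ or $\{t=0\}$, the domain $B\cap(\R_+\times\R_+)$ is still Lipschitz with constants uniform in the center, so the constant in the embedding remains uniform; alternatively, since $f\in C^2$, one may simply extend $f$ to a neighborhood and apply the embedding on the full disk.

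The only point requiring care — and the step I expect to be the mild obstacle — is the uniformity of the constant across all admissible balls, including those whose intersection with the first quadrant is a proper sub-domain (a half-disk or a quarter-disk). The rescaling in Step 1 maps any such $B$ to a fixed reference domain (the unit disk, half-disk, or quarter-disk), of which there are essentially only finitely many shapes up to the position of the center relative to the axes; each is a bounded Lipschitz domain, so each carries a Sobolev embedding constant, and taking the maximum over this finite list yields a single $C$ that works uniformly. This is the reason the inequality is scale-invariant in the precise form stated, and it is exactly what is needed to feed into the Caccioppoli estimate of Lemma \ref{l-cacci lemma} when deriving the $L^2$ Moser inequality in Case (i).
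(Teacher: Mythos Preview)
Your proposal is correct and follows essentially the same approach as the paper: reduce to the unit ball by the rescaling $g(s,y)=f(t_0+Rs,x_0+Ry)$, invoke the classical Sobolev embedding $W^{2,2}(B_1)\hookrightarrow L^\infty(B_1)$ in dimension $2$, and change variables back to recover the stated $R$-dependent inequality. Your treatment is in fact more detailed than the paper's, which simply cites the standard embedding for $R=1$ and then says ``for the general case we use rescaling''; your extra care about uniformity when the ball meets the boundary of the quadrant is not addressed in the paper but is a reasonable point to raise.
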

%We note that Lemma \ref{embedding lemma} is the standard Sobolev embedding theorem, see for example \cite{}.
\begin{proof}
For $R=1$, this is a consequence of the standard Sobolev embedding, see for example \cite{Adam}. That is,
\begin{align}\label{eee1}
\|f\|_{L^\infty(B((t_0, x_0), 1))}&\ls \Big( \int_{B((t_0, x_0), 1))} |f(t,x)|^2dtdx \Big)^{1\over 2}+ \Big(  \int_{B((t_0, x_0), 1))} |\nabla_{t,x} f(t,x)|^2dtdx \Big)^{1\over 2}\\
&\quad +\Big(  \int_{B((t_0, x_0), 1))} |\nabla_{t,x}^2f(t,x)|^2dtdx \Big)^{1\over 2}.\noz
\end{align}

For general case, we use the rescaling.
Namely, for any $R\in(0, \fz)$, consider the function $f(t,x)$ on the ball $B((t_0, x_0), R)$. Let $\tilde t={1\over R}(t-t_0)$ and
$\tilde x={1\over R}(x-x_0)$. Then it is obvious that $(\tilde t,\tilde x)\in B((t_0, x_0), 1)$. Now define
$$g(\tilde t,\tilde x)=f(R \tilde t+t_0, R\tilde x+x_0).$$ Then, by applying \eqref{eee1} to $g(\tilde t, \tilde x)$ and changing of variables, we obtain our version of 
the embedding result.
\end{proof}

%We now borrow the idea of iteration in \cite{hl97} to establish the Moser-type inequality for the Bessel harmonic function $u(t,x) $ as follows.

%\begin{proof}[Proof of Theorem \ref{t-moser inequ}]

We now apply the Caccioppoli inequality, the Sobolev embedding theorem to prove the Moser type inequality in $L^2$. To be more precise,
we claim that for all $\az\in(0, 1)$,
$r\in (0, 2R]$ and all $(t, x)\in B((t_0,x_0), \az r)$,
\begin{equation}\label{2.4}
| u(t,x)|\ls  \lf\{\frac1{ \wz m_\lambda (B((t_0,x_0),(1-\az)r)) }
\iint_{ B((t_0,\,x_0),\,r)}| u(s,y)|^2\,y^{2\lambda}dy\,ds\r\}^{1/2},
\end{equation}
where the implicant constant is independent of $ u$, $r$, $\az$, $t_0$ and $x_0$.

To see this, consider the ball $\mathbb B:=B((t,x), (1-\alpha)r/2)$.
%
%Case 1: $t_0 < {x_0\over 2}$. In this case, $\az r, (1-\az)r < {x_0\over 4}$.
%
%This implies that for  any $(t,x)\in B((t_0,x_0), \az r)$,
%$(1-\az)r<x/2 $.
%and then for any $(\wz t, y)\in \mathbb B\setminus {1\over2}\mathbb B$,
%$y/2< x<3y/2$.
Observe that $x\sim x_0$, and moreover, for any $(s,y)\in \mathbb B$,  we have  $y\sim x_0$.

In this case, we first note that $\partial_t u$ is also a solution of \eqref{bessel laplace equation} since $u$ is a solution. Hence, applying the Caccioppoli inequality in Lemma \ref{l-cacci lemma} to $\partial_t u$, we get that
\begin{align}\label{ee1}
\iint_{\mathbb B}|\gratx  \partial_t u(t,x)|^2
\,\dmz(x)\,dt  &\le \frac C{(1-\az)^2r^2}
\iint_{\mathbb B}|\partial_t u(t,x)|^2\,\dmz(x)\,dt\\
&\le \frac C{(1-\az)^2r^2}
\iint_{\mathbb B}|\nabla_{t,x} u(t,x)|^2\,\dmz(x)\,dt\noz\\
&\le \frac C{(1-\az)^4r^4}
\iint_{\mathbb B}|u(t,x)|^2\,\dmz(x)\,dt.\noz
\end{align}

Again, since $u$ is  a solution of \eqref{bessel laplace equation}, we have
\begin{align*}
\iint_{\mathbb B}|\partial_x^2 u(t,x)|^2
\,\dmz(x)\,dt 
 &\ls \iint_{\mathbb B}|\partial_t^2 u(t,x)|^2
\,\dmz(x)\,dt+\iint_{\mathbb B}\Big| {2\lz\over x} \partial_x u(t,x)\Big|^2
\,\dmz(x)\,dt \\
&=: I+II.
\end{align*}

Note that the term $I$ is bounded by the left-hand side of \eqref{ee1}. We obtain that
\begin{align}\label{e of I}
I
\le \frac C{(1-\az)^4r^4}
\iint_{\mathbb B}|u(t,x)|^2\,\dmz(x)\,dt.
\end{align}
For the term $II$, using the Caccioppoli inequality in Lemma \ref{l-cacci lemma} and the fact that $(1-\az)r<x/2 $, we get
\begin{align*}
II
&\le \frac C{(1-\az)^4r^4}
\iint_{\mathbb B}|u(t,x)|^2\,\dmz(x)\,dt.\noz
\end{align*}
Combining the estimates of $I$ and $II$, we obtain that
\begin{align}\label{ee2}
\iint_{\mathbb B}|\partial_x^2 u(t,x)|^2
\,\dmz(x)\,dt 
&\le \frac C{(1-\az)^4r^4}
\iint_{\mathbb B}|u(t,x)|^2\,\dmz(x)\,dt.
\end{align}
Then we further have
\begin{align}\label{ee3}
&\hskip-.5cm\iint_{\mathbb B}|\nabla_{t,x}^2 u(t,x)|^2
\,\dmz(x)\,dt \\
&= \iint_{\mathbb B}\big(\pa_{t}^2 u(t,x)+ \pa_{x}^2 u(t,x)\big)^2
\,\dmz(x)\,dt\noz \\ 
&\leq 2\iint_{\mathbb B}\big(\pa_{t}^2 u(t,x)\big)^2
\,\dmz(x)\,dt+ 2\iint_{\mathbb B}\big( \pa_{x}^2 u(t,x)\big)^2
\,\dmz(x)\,dt \noz\\
&\le \frac C{(1-\az)^4r^4}
\iint_{\mathbb B}|u(t,x)|^2\,\dmz(x)\,dt,\noz
\end{align}
where the last inequality follows from \eqref{ee2} and the estimate of the term $I$ in \eqref{e of I}.

Now applying the Sobolev embedding in Lemma \ref{embedding lemma} we have
\begin{align*}
\|u\|_{L^\infty(\mathbb B)}&\ls \Big( {1\over (1-\az)^2r^2} \int_{\mathbb B} |u(t,x)|^2dtdx \Big)^{1\over 2}+ \Big(  \int_{\mathbb B} |\nabla_{t,x} u(t,x)|^2dtdx \Big)^{1\over 2}\\
&\quad +\Big( (1-\az)^2r^2 \int_{\mathbb B} |\nabla_{t,x}^2u(t,x)|^2dtdx \Big)^{1\over 2}\\
&\ls \Big( {1\over (1-\az)^2r^2 x_0^{2\lz}} \int_{\mathbb B} |u(t,x)|^2dt\dmz(x) \Big)^{1\over 2}+ \Big( {1\over x_0^{2\lz}} \int_{\mathbb B} |\nabla_{t,x} u(t,x)|^2dt\dmz(x) \Big)^{1\over 2}\\
&\quad +\Big( (1-\az)^2r^2 {1\over x_0^{2\lz}} \int_{\mathbb B} |\nabla_{t,x}^2u(t,x)|^2dt\dmz(x) \Big)^{1\over 2}\\
&\ls  \Big( {1\over (1-\az)^2r^2 x_0^{2\lz}} \int_{\mathbb B} |u(t,x)|^2dt\dmz(x) \Big)^{1\over 2},
\end{align*}
where the second inequality follows from the fact that $x\sim x_0$, and the last inequality follows from the Caccioppoli inequality in Lemma \ref{l-cacci lemma}
and the inequality \eqref{ee3}.

This implies that the claim  \eqref{2.4} holds in this case.

By H\"older's inequality and \eqref{2.4}, we further obtain that \eqref{2.4} holds with 2 replaced by $p$ for all $p\in[2, \fz)$. Then \eqref{sub har inequ 1} for $p\in[2, \fz)$ follows from \eqref{2.4} with  $\az:=1/2$
and $r:=2R$.

We now use the technique of iteration (see for example \cite{hl97}) to establish the Moser-type inequality for the Bessel harmonic function $u(t,x) $ for $p\in(0,2)$ as follows.
Observe that for all
$a,\,b\in(0, \fz)$ and $q\in(1, \fz)$, $ab<a^q+b^{q'}$,
where $q':=\frac q{q-1}$. Then
by this with $q:=\frac1{1-p/2}$ and \eqref{2.4},
there exists positive constant $C$ such that
for all $r\in (0, 2R]$, $\wz r\in (0, r)$ and almost all $(t,x)\in B((t_0, x_0), \wz r)$,
\begin{eqnarray*}
| u(t,x)|&\ls& \| u\|^{1-\frac p2}_{L^\fz(B((x_0,\,t_0),\,r))}\wz m_\lz( B((t_0,x_0),r-\wz r)  )^{-\frac 12}\| u\|^{\frac p2}_{L^p(B((t_0,\,x_0),\,2R),\,d \tilde m_\lz)}\\
&\le& \frac12\|u\|_{L^\fz(B((t_0,\,x_0),\,r))}+C\wz m_\lz( B((t_0,x_0),r-\wz r)  )^{-\frac1p}\| u\|_{L^p(B((t_0,\,x_0),\,2R),\,d \tilde m_\lz)}.
\end{eqnarray*}

Set $f(\wz r):=\| u\|_{L^\fz(B((t_0,\,x_0),\, \wz r))}$,
where $\wz r\in (0, r]$. By
the last inequality, we see that for all $r,\wz r$ such that $0<\wz r< r\le 2R$,
\begin{equation}\label{2.6}
f(\wz r)\le \frac12 f(r)+ C\wz m_\lz( B((t_0,x_0),r-\wz r)  )^{-\frac1p}\| u\|_{L^p(B((t_0,\,x_0),\,2R),\,d \tilde m_\lz)}.
\end{equation}
Now fix $\wz r$, $r$ and write $r_0:=\wz r$, $r_\fz:= r$, and
$r_{j+1}:= r_{j}+(1-\tau)\tau^{j}(r-\wz r)$ for $j\in\zz_+:=\nn\cup\{0\}$,
where $\tau\in (0, 1)$ such that
$2\tau^{(2\lz+1)/p}>1$.
An application of \eqref{2.6} gives us that
$$f(r_j)\le \frac12f(r_{j+1})+C\wz m_\lz( B((t_0,x_0),r_{j+1}-r_j)  )^{-\frac1p}\| u\|_{L^p(B((t_0,\,x_0),\,2R),\,d \tilde m_\lz)}$$
 for all $j\in\zz_+$. This in turn implies that, by  iteration of $k$ steps,
\begin{eqnarray*}
f(\wz r)&=&f(r_0)\le \frac12f(r_1)
+C\frac{\| u\|_{L^p(B((t_0,\,x_0),\,2R),\,d \tilde m_\lz)}}{\wz m_\lz( B((t_0,x_0),r_{1}-r_0)  )^{\frac1p}}\\
&\le&\lf(\frac12\r)^kf(r_k)
+C\sum_{j=0}^{k-1}2^{-j}\tau^{-j(2\lz+1)/p}
\frac{\| u\|_{L^p(B((t_0,\,x_0),\,2R),\,d\tilde m_\lz)}}{\wz m_\lz( B((t_0,x_0),(1-\tau)(r-\wz r))  )^{\frac1p}}\\
&\le&\lf(\frac12\r)^kf(r_k)
+C\frac{\| u\|_{L^p(B((t_0,\,x_0),\,2R),\,d\tilde m_\lz)}}{\wz m_\lz( B((t_0,x_0),(1-\tau)(r-\wz r))  )^{\frac1p}}.
\end{eqnarray*}
We note that $u\in C^2(\R_+\times\R_+)$, then $u\in L^\fz(B((t_0,\,x_0),\,2R))$. Letting $k\to\fz$ and using $f(r_j)\le f(2R)<\fz$
for all $j\in\zz_+$, we have that
\begin{equation*}
f(\wz r)\ls\frac{\| u\|_{L^p(B((t_0,\,x_0),\,2R),\,d\tilde m_\lz)}}{\wz m_\lz( B((t_0,x_0),(1-\tau)(r-\wz r))  )^{\frac1p}}.
\end{equation*}
Taking $r:=2R$ and $\wz r:=R$, we obtain that
\begin{equation*}
\|u\|_{L^\fz(B((t_0,\,x_0),\,R))}\ls
\frac{\| u\|_{L^p(B((t_0,\,x_0),\,2R),\,d\tilde m_\lz)}}{\wz m_\lz( B((t_0,x_0),R)  )^{\frac1p}}.
\end{equation*}
This finishes the proof of \eqref{sub har inequ 1} for $p\in (0, 2)$
and hence, the proof of case {\rm(i)}. % Theorem \ref{t-moser inequ}.
%\end{proof}

\bigskip
We now consider Case (ii). We claim that 
when $R> x_0/4$, we have
\begin{align}\label{sub har inequ 2}
\sup_{(t,\,x)\in B((t_0,\,x_0),\,R)}| u(t, x)|\le \lf[\dfrac{ C_{p,\,\lz}}{\tilde m_\lz(B((t_0, x_0), 12R))}\iint_{B((t_0,\, x_0),\, 12R)}| u(t, x)|^px^{2\lz}dx\,dt\r]^{1/p}.
\end{align}

Note that in this case, we have $12R\geq 3x_0$, then $B((t_0,x_0),R)\subset B((t_0,0), 5R)\subset B((t_0,x_0),12R)$. We have $\tilde m_\lz(B((t_0, x_0), 12R))\sim R^{2\lambda+2}\sim \tilde m_\lz(B((t_0, 0), 5R))$.  Thus, to prove \eqref{sub har inequ 2}, it suffices to show that 
\begin{align}\label{sub har inequ 2 eee}
\sup_{(t,\,x)\in B((t_0,\,0),R)}| u(t, x)|\le \lf[\dfrac{ C_{p,\,\lz}}{\tilde m_\lz(B((t_0, 0), 5R))}\iint_{B((t_0,\, 0),\, 5R)}| u(t, x)|^px^{2\lz}dx\,dt\r]^{1/p}.
\end{align}
We point out that the equation \eqref{bessel laplace equation} is translation invariant under the variable $t$. Thus, to prove \eqref{sub har inequ 2 eee}, it suffices to prove that
\begin{align}\label{sub har inequ 2 ee1}
\sup_{(t,\,x)\in B((0,\,0),R)}| u(t, x)|\le \lf[\dfrac{ C_{p,\,\lz}}{\tilde m_\lz(B((0, 0), 5R))}\iint_{B((0,\, 0),\, 5R)}| u(t, x)|^px^{2\lz}dx\,dt\r]^{1/p}.
\end{align}

To obtain \eqref{sub har inequ 2 ee1},  we use the Poisson representation of the harmonic function $u$ (\cite[P. 25]{ms}). 
To begin with, following Muckenhoupt and Stein \cite{ms}, we consider the even extension of $u(t,x)$ in terms of $x$, i.e.,
when $x<0$, we define $u(t,x)=u(t,-x)$.

Then we use the polar coordinates to consider the estimates on the circles. Let $x:=r\sin \theta$, $t:=r\cos \theta$, and let 
$$H:=\dfrac{ 1}{\tilde m_\lz(B((0, 0), 5R))}\int_0^{5R}\int_0^\pi| u(r\cos \theta, r\sin \theta)|^p (r\sin \theta)^{2\lz}d\theta\, rdr,$$
$$m_p(r):=\lf[\int_0^\pi| u(r\cos \theta, r\sin \theta)|^p (\sin \theta)^{2\lz}\,d\theta\r]^{1\over p},$$
and
$$m_\infty(r):=\sup_{x^2+t^2=r^2} |u(t, x)|^p, $$
where $r\in (0,5R]$.

To prove \eqref{sub har inequ 2 ee1}, it suffices to prove that there exists $r_0\in (R, 5R)$ such that
\begin{align}\label{sub har inequ 2 ee2}
m_\infty(r_0)\ls H.
\end{align}
In fact, assume \eqref{sub har inequ 2 ee2} at the moment. Then by the maximal principle (Theorem 1 in \cite{ms}), we obtain that for any $(t,x)\in B((0,\,0),\,R)$, we have
$$ |u(t,x)|\leq  m_\infty(r_0)\ls H,$$
which implies \eqref{sub har inequ 2 ee1}.

We now prove \eqref{sub har inequ 2 ee2}. To begin with, we can assume that for all $r\in (R, 5R)$, we have
\begin{align}\label{sub har inequ 2 ee3}
m_\infty(r_0)> H.
\end{align}
since otherwise \eqref{sub har inequ 2 ee2} holds. 

To continue, we first note that
\begin{align}\label{sub har inequ 2 ee4}
m_1(r)&= \int_0^\pi| u(r\cos \theta, r\sin \theta)| (\sin \theta)^{2\lz}\,d\theta\\
&\leq m_\infty(r)^\alpha  \int_0^\pi| u(r\cos \theta, r\sin \theta)|^{1-\alpha} (\sin \theta)^{2\lz}\,d\theta\noz\\
&= m_\infty(r)^\alpha m_p(r)^p,\noz
\end{align}
where $\alpha:=1-p$.

Next we recall the Poisson representation (see \cite[P. 25]{ms}) as follows.
\begin{align}\label{Poisson re}
u(\rho\cos\theta,\rho\sin\theta)=\int_0^\pi P\Big({\rho\over r},\theta,\phi\Big)u(r\cos\phi,r\sin\phi)(\sin \phi)^{2\lz}\,d\phi,
\end{align}
where $\rho<r$, and $P\Big({\rho\over r},\theta,\phi\Big)$ is the Poisson kernel defined as 
\begin{align}\label{Poisson kernel}
P\Big({\rho\over r},\theta,\phi\Big):={\lambda\Big(1-{\rho^2\over r^2}\Big)\over\pi}\int_0^\pi {(\sin\beta)^{2\lz-1} \over  \big[(t-\xi)^2+(x-\eta)^2+2x\eta(1-\cos\beta) \big]^{\lz+1}}d\beta,
\end{align}
with $ t:={\rho\over r}\cos\theta $, $ x:={\rho\over r}\sin\theta $, $ \xi:=\cos\phi $ and $ \eta:=\sin\phi $.

From the definition, it is direct that 
\begin{align*}
\Big\|P\Big({\rho\over r},\theta,\phi\Big)\Big\|_\infty\ls \Big(1-{\rho^2\over r^2}\Big) {1\over  \Big(1-{\rho\over r}\Big)^{2\lz+2}  } \approx {1\over  \Big(1-{\rho\over r}\Big)^{2\lz+1}  }. 
\end{align*}

Then, from \eqref{Poisson re} we obtain that
\begin{align*}
|u(t,x)|&=|u(\rho\cos\theta,\rho\sin\theta)|\\
&\leq \int_0^\pi |u(r\cos\phi,r\sin\phi)|(\sin \phi)^{2\lz}\,d\phi\ \Big\|P\Big({\rho\over r},\theta,\phi\Big)\Big\|_\infty\\
&\ls  {1\over  \Big(1-{\rho\over r}\Big)^{2\lz+1}  } \int_0^\pi |u(r\cos\phi,r\sin\phi)|(\sin \phi)^{2\lz}\,d\phi\\
&\ls {1\over  \Big(1-{\rho\over r}\Big)^{2\lz+1}  } m_1(r),
\end{align*}
which implies that 
\begin{align}\label{sub har inequ 2 ee5}
m_\infty(\rho)\ls {1\over  \Big(1-{\rho\over r}\Big)^{2\lz+1}  } m_1(r).
\end{align}
This, together with \eqref{sub har inequ 2 ee4}, gives
\begin{align*}
m_\infty(\rho)\leq C_p {1\over  \Big(1-{\rho\over r}\Big)^{2\lz+1}  } m_\infty(r)^\alpha m_p(r)^p,\noz
\end{align*}
and hence
\begin{align}\label{sub har inequ 2 ee6}
{m_\infty(\rho)\over H^{1\over p}}\leq C_p {1\over  \Big(1-{\rho\over r}\Big)^{2\lz+1}  } {m_\infty(r)^\alpha\over H^{\alpha\over p}} {m_p(r)^p\over H}.
\end{align}

To continue, we consider the following 3 cases.

\medskip
Case 1: $R>1$.
\medskip

Let $a<1$ and $\rho=r^a$. We now take the logarithm on both side of \eqref{sub har inequ 2 ee6} and integrate on both side from $R$ to $5R$. Then we have 
\begin{align}\label{sub har inequ 2 ee7}
\int_{R}^{5R} \log\Big( {m_\infty(r^a)\over H^{1\over p}}\Big) {dr\over r}&\leq  \int_{R}^{5R} \log\Bigg( {C_p\over  \big(1-{\rho\over r}\big)^{2\lz+1}  } {m_\infty(r)^\alpha\over H^{\alpha\over p}} {m_p(r)^p\over H}\Bigg) {dr\over r}\\
& \leq  \int_{R}^{5R} \log\Bigg( {C_p\over  \big(1-{\rho\over r}\big)^{2\lz+1}  } \Bigg) {dr\over r}\noz\\
&\quad+  \int_{R}^{5R} \log\Bigg( {m_\infty(r)^\alpha\over H^{\alpha\over p}} \Bigg) {dr\over r}\noz\\
&\quad+  \int_{R}^{5R}\log\bigg( {m_p(r)^p\over H}\bigg) {dr\over r}.\noz
\end{align}

Next we claim that
\begin{align}\label{sub har inequ 2 ee8}
\int_{R}^{5R} m_p(r)^p  {dr\over r}\ls  H.
\end{align}

To see this, note that  
 \begin{align*}
H&:=\dfrac{ 1}{\tilde m_\lz(B((0, 0), 5R))}\int_0^{5R}\int_0^\pi| u(r\cos \theta, r\sin \theta)|^p (r\sin \theta)^{2\lz}d\theta\,rdr\\
&={c_\lambda\over R^{2\lz+2}} \int_R^{5R}\int_0^\pi| u(r\cos \theta, r\sin \theta)|^p (\sin \theta)^{2\lz}d\theta\, r^{2\lz+1}dr\\
&\geq {c_\lambda\over R^{2\lz+2}} \int_{R}^{5R}\int_0^\pi| u(r\cos \theta, r\sin \theta)|^p (\sin \theta)^{2\lz}d\theta\, r^{2\lz+1}dr\\
&\gs R^{2\lz+1}{c_\lambda\over R^{2\lz+2}} \int_{R}^{5R}\int_0^\pi| u(r\cos \theta, r\sin \theta)|^p (\sin \theta)^{2\lz}d\theta\, dr\\
&\approx {1\over R} \int_{R}^{5R}\int_0^\pi| u(r\cos \theta, r\sin \theta)|^p (\sin \theta)^{2\lz}d\theta\, dr\\
&\approx  \int_{R}^{5R}\int_0^\pi| u(r\cos \theta, r\sin \theta)|^p (\sin \theta)^{2\lz}d\theta\, {dr\over r},
\end{align*}
which implies the claim \eqref{sub har inequ 2 ee8}.

Then by Jensen's inequality and \eqref{sub har inequ 2 ee8}, we get that
\begin{align}\label{sub har inequ 2 ee9}
\int_{R}^{5R} \log\bigg( {m_p(r)^p\over H}\bigg) {dr\over r}
\ls \log\bigg( \int_{R}^{5R}  {m_p(r)^p\over H} {dr\over r}\bigg) 
\ls 1.
\end{align}

Substituting \eqref{sub har inequ 2 ee9} back into \eqref{sub har inequ 2 ee7}, we obtain that
\begin{align}\label{sub har inequ 2 ee10}
\int_{R}^{5R} \log\Big( {m_\infty(r^a)\over H^{1\over p}}\Big) {dr\over r}
& \leq \wz C_p+\alpha \int_{R}^{5R} \log\Bigg( {m_\infty(r)\over H^{1\over p}} \Bigg) {dr\over r}.
\end{align}

Changing the variable on the left-hand side of \eqref{sub har inequ 2 ee10}, we obtain that
\begin{align}\label{sub har inequ 2 ee10e1}
{1\over a}\int_{R^a}^{(5R)^a} \log\Big( {m_\infty(r)\over H^{1\over p}}\Big) {dr\over r}
& \leq \wz C_p+\alpha \int_{R}^{5R} \log\Bigg( {m_\infty(r)\over H^{1\over p}} \Bigg) {dr\over r},
\end{align}
which gives
\begin{align}\label{sub har inequ 2 ee11}
&{1\over a}\int_{R^a}^{R} \log\Big( {m_\infty(r)\over H^{1\over p}}\Big) {dr\over r} + \Big({1\over a}-\alpha\Big)\int_{R}^{(5R)^a} \log\Big( {m_\infty(r)\over H^{1\over p}}\Big) {dr\over r}\\
&\quad-\alpha \int_{(5R)^a}^{5R} \log\Big( {m_\infty(r)\over H^{1\over p}}\Big) {dr\over r}\noz\\
& \leq \wz C_p.\noz
\end{align}
Now for arbitrary small $\epsilon>0$, by choosing $a$ less than 1 but sufficiently close to 1, we obtain that ${1\over a}-\alpha>0$ and that
\begin{align*}
\alpha \int_{(5R)^a}^{5R} \log\Big( {m_\infty(r)\over H^{1\over p}}\Big) {dr\over r}<\epsilon,
\end{align*}
which implies that
\begin{align*}
\int_{R}^{(5R)^a} \log\Big( {m_\infty(r)\over H^{1\over p}}\Big) {dr\over r} \leq {\wz {\wz C}}_p.
\end{align*}
 Thus, there exists $r_0$ such that 
 $$ m_\infty(r_0)\leq \overline{C}_p. $$

\medskip
Case 2: $5R<1$.
\medskip

Now let $a>1$ and $\rho=r^a$.
Then repeating again the steps in Case 1, we obtain again \eqref{sub har inequ 2 ee11} for $a>1$. Hence,  for arbitrary small $\epsilon>0$, by choosing $a$ greater than 1 but sufficiently close to 1, we obtain that ${1\over a}-\alpha>0$ and that
\begin{align*}
\alpha \int_{(5R)^a}^{5R} \log\Big( {m_\infty(r)\over H^{1\over p}}\Big) {dr\over r}<\epsilon,
\end{align*}
which implies that
\begin{align*}
\int_{R}^{(5R)^a} \log\Big( {m_\infty(r)\over H^{1\over p}}\Big) {dr\over r} \leq {\wz {\wz C}}_p.
\end{align*}
 Thus, there exists $r_0$ such that 
 $$ m_\infty(r_0)\leq \overline{C}_p. $$

%\section{Preliminaries}
%\label{s2}

\medskip
Case 3: $1/5\leq R\leq1$.
\medskip

Now let $a>1$ and $\rho=r^a$.
Then repeating again the steps in Case 1, we obtain again \eqref{sub har inequ 2 ee10e1} for $a>1$. 
 
\begin{align*}
&{1\over a}\int_{R^a}^{R} \log\Big( {m_\infty(r)\over H^{1\over p}}\Big) {dr\over r} + \Big({1\over a}-\alpha\Big)\int_{R^a}^{5R} \log\Big( {m_\infty(r)\over H^{1\over p}}\Big) {dr\over r}-\alpha \int_{5R}^{(5R)^a} \log\Big( {m_\infty(r)\over H^{1\over p}}\Big) {dr\over r}\noz \leq \wz C_p,\noz
\end{align*}
which, again, implies that  there exists $r_0$ such that 
 $$ m_\infty(r_0)\leq \overline{C}_p. $$

Combining the three cases above, we obtain that \eqref{sub har inequ 2 ee2} holds, which implies \eqref{sub har inequ 2 ee1}.
And hence, we obtain that \eqref{sub har inequ 2} holds.

In the end, combining the estimates in Case (i) and Case (ii), we obtain that \eqref{sub har inequ}  holds, which finishes the proof of
Theorem \ref{t-moser inequ}.

\section{Applications}
\label{sec:application}
\setcounter{equation}{0}

As an application, we can obtain a direct proof of equivalent characterizations of the Hardy spaces associated to Bessel operator $\Delta_\lz$ via non-tangential maximal function and radial maximal functions  defined in terms of the Poisson semigroup.

Recall that in \cite{bdt}, they introduced the Hardy spaces associated with $\Delta_\lz$ via the Riesz transforms, radial maximal functions, and showed that they are equivalent.To be more specific,
consider the following spaces (see Definition 1.1 in \cite{bdt}):

\smallskip
(a) $H^1_{\Delta_\lz,Riesz}(\R_+,\dmz):=\{ f\in L^1(\R_+,\dmz):\ \ \riz(f)\in L^1(\R_+,\dmz)  \}$ with the norm 
$$ \|f\|_{H^1_{\Delta_\lz,Riesz}(\R_+,\dmz)}:=\|f\|_{L^1(\R_+,\dmz)}+\|\riz(f)\|_{L^1(\R_+,\dmz)}.  $$

%\smallskip
(b) $H^1_{\Delta_\lz,max}(\R_+,\dmz):=\{ f\in L^1(\R_+,\dmz):\ \ \mathcal{R}(f)\in L^1(\R_+,\dmz)  \}$ with the norm 
$$ \|f\|_{H^1_{\Delta_\lz,max}(\R_+,\dmz)}:=\|f\|_{L^1(\R_+,\dmz)}+\big\|\mathcal{R}(f)\big\|_{L^1(\R_+,\dmz)},  $$
where $$\mathcal{R}(f)(x):=\sup_{t>0}\big|e^{-t\sqrt{\Delta_\lz}}(f)\big|.$$

The following result was proved (\cite{bdt} Theorem 1.7): let $\lz>0$ and $f\in L^1(\R_+,\dmz)$. Then the following assertions are equivalent:

\smallskip
(i) $f\in H^1_{\Delta_\lz,Riesz}(\R_+,\dmz)$;

\smallskip
(ii) $f\in H^1_{\Delta_\lz,max}(\R_+,\dmz)$.

\smallskip
Moreover, the corresponding norms are equivalent.

\medskip

We now consider the Hardy spaces associated to Bessel operator $\Delta_\lz$ via non-tangential maximal function as follows. 
\begin{defn}
Suppose $\lz>0$. Define
$$H^1_{\Delta_\lz,n-max}(\R_+,\dmz):=\Big\{ f\in L^1(\R_+,\dmz):\ \ \mathcal{N}(f)\in L^1(\R_+,\dmz)  \Big\}$$ with the norm 
$$ \|f\|_{H^1_{\Delta_\lz,n-max}(\R_+,\dmz)}:=\|f\|_{L^1(\R_+,\dmz)}+\big\|\mathcal{N}(f)\big\|_{L^1(\R_+,\dmz)}, $$
where $$\mathcal{N}(f)(x):=\sup_{\substack{|x-y|<t\\y\in\R_+}}\big|e^{-t\sqrt{\Delta_\lz}}(f)(y)\big|.$$
\end{defn}
Then, based on our main result, the Moser inequality, we obtain that
\begin{thm}\label{thm appl}
Let $\lz>0$ and $f\in L^1(\R_+,\dmz)$. Then the following assertions are equivalent:

\smallskip
{\rm(ii)} $f\in H^1_{\Delta_\lz,max}(\R_+,\dmz)$;

\smallskip
{\rm(iii)} $f\in H^1_{\Delta_\lz,n-max}(\R_+,\dmz)$.

\smallskip
Moreover, the corresponding norms are equivalent.

\end{thm}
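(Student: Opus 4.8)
\medskip

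The plan is to establish the equivalence of norms; the implication {\rm(iii)}$\Rightarrow${\rm(ii)} is trivial, while {\rm(ii)}$\Rightarrow${\rm(iii)} is where Theorem~\ref{t-moser inequ} enters. For the trivial direction, taking $y=x$ (which is admissible since $0<t$) gives $\mathcal{R}(f)(x)\le\mathcal{N}(f)(x)$ for every $x\in\R_+$, hence $\|f\|_{H^1_{\Delta_\lz,max}(\R_+,\dmz)}\le\|f\|_{H^1_{\Delta_\lz,n-max}(\R_+,\dmz)}$. For the substantial direction I would argue as follows. Given $f\in L^1(\R_+,\dmz)$, set $u(t,x):=e^{-t\sqrt{\Delta_\lz}}f(x)$; by \cite{ms,bdt} this is the Poisson integral of $f$ in the sense of Muckenhoupt--Stein, it belongs to $C^\infty(\R_+\times\R_+)$, and it is $\lambda$-harmonic, i.e.\ a solution of \eqref{bessel laplace equation}, so Theorem~\ref{t-moser inequ} applies to it. Fix once and for all an exponent $p\in(0,1)$.

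The key step is the pointwise domination
\begin{align}\label{p-key}
\mathcal{N}(f)(x)\lesssim\big[\cm_\lz\big(\mathcal{R}(f)^{p}\big)(x)\big]^{1/p},\qquad x\in\R_+,
\end{align}
where $\cm_\lz$ is the Hardy--Littlewood maximal operator on the space of homogeneous type $(\R_+,|\cdot|,\dmz)$; recall that $\dmz(x)=x^{2\lz}\,dx$ is doubling on $\R_+$. To prove \eqref{p-key}, fix $x\in\R_+$ and $(y,t)$ with $y\in\R_+$, $|x-y|<t$, let $\kz:=1/48$, and apply Theorem~\ref{t-moser inequ} to $u$ on $B:=B((t,y),\kz t)$ with exponent $p$; this is legitimate for every such $y$ because the balls in \eqref{ball} are by definition intersected with $\R_+\times\R_+$. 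This gives
\[
|u(t,y)|\le\sup_{B}|u|\lesssim\left[\frac{1}{\tilde m_\lz(B((t,y),12\kz t))}\iint_{B((t,y),12\kz t)}|u(s,z)|^{p}\,z^{2\lz}\,dz\,ds\right]^{1/p}.
\]
On $12B:=B((t,y),12\kz t)$ one has $s\in(t/2,2t)$ and $z\in I(x,2t)$ (as $|x-y|<t$ and $12\kz<1/2$), and $|u(s,z)|=|e^{-s\sqrt{\Delta_\lz}}f(z)|\le\mathcal{R}(f)(z)$; moreover, comparing measures according to whether $x\ge2t$ or $x<2t$ gives $\tilde m_\lz(12B)\gtrsim t\,m_\lz(I(x,2t))$ with constant depending only on $\lz$. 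Hence
\[
|u(t,y)|^{p}\lesssim\frac{1}{t\,m_\lz(I(x,2t))}\int_{t/2}^{2t}\!\!\int_{I(x,2t)}\mathcal{R}(f)(z)^{p}\,\dmz(z)\,ds\lesssim\frac{1}{m_\lz(I(x,2t))}\int_{I(x,2t)}\mathcal{R}(f)^{p}\,\dmz\le\cm_\lz\big(\mathcal{R}(f)^{p}\big)(x),
\]
and taking the supremum over all admissible $(y,t)$ yields \eqref{p-key}.

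To conclude, since $\dmz$ is doubling, $\cm_\lz$ is bounded on $L^{q}(\R_+,\dmz)$ for every $q\in(1,\infty]$; applying this with $q=1/p>1$ and using \eqref{p-key},
\[
\|\mathcal{N}(f)\|_{L^1(\R_+,\dmz)}=\big\|\mathcal{N}(f)^{p}\big\|_{L^{1/p}(\R_+,\dmz)}^{1/p}\lesssim\big\|\cm_\lz(\mathcal{R}(f)^{p})\big\|_{L^{1/p}(\R_+,\dmz)}^{1/p}\lesssim\big\|\mathcal{R}(f)^{p}\big\|_{L^{1/p}(\R_+,\dmz)}^{1/p}=\|\mathcal{R}(f)\|_{L^1(\R_+,\dmz)}.
\]
As the term $\|f\|_{L^1(\R_+,\dmz)}$ is common to both Hardy-space norms, this gives $\|f\|_{H^1_{\Delta_\lz,n-max}(\R_+,\dmz)}\lesssim\|f\|_{H^1_{\Delta_\lz,max}(\R_+,\dmz)}$, which together with the trivial direction proves {\rm(ii)}$\Leftrightarrow${\rm(iii)} with equivalent norms.

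The only point that will need genuine care is the choice of the Whitney-type ball $B$ and the two measure comparisons $\tilde m_\lz(12B)\sim\tilde m_\lz(B)$ and $\tilde m_\lz(12B)\gtrsim t\,m_\lz(I(x,2t))$, carried out uniformly in the position of $y$ relative to $t$ --- in particular near the degeneracy $\{x=0\}$, where the truncation built into the definition \eqref{ball} is precisely what rescues the argument. All the hard analysis has already been packed into Theorem~\ref{t-moser inequ}; granting it, the above is the standard Fefferman--Stein passage from radial to non-tangential maximal control, transplanted to the homogeneous space $(\R_+,\dmz)$.
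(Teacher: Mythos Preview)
Your proposal is correct and follows essentially the same approach as the paper: the trivial direction is identical, and for the substantial direction both apply Theorem~\ref{t-moser inequ} with an exponent in $(0,1)$ to the Poisson extension $u$, dominate $|u(s,z)|$ by $\mathcal{R}(f)(z)$, and reduce to the $L^{1/p}$-boundedness of the Hardy--Littlewood maximal operator on $(\R_+,\dmz)$. The only cosmetic difference is that the paper centers the Moser ball at $(t,x)$ with radius $R=t$ (using $|x-y|<t$ to capture $(t,y)$), whereas you center at $(t,y)$ with the smaller radius $t/48$; your choice makes the inclusion $12B\subset(t/2,2t)\times I(x,2t)$ and the measure comparison $\tilde m_\lz(12B)\gtrsim t\,m_\lz(I(x,2t))$ slightly more explicit, but the substance is the same.
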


We point out that this theorem is not new, since it is contained in a more general result on spaces of homogeneous type, see for example \cite{GLY,yy,YZ}, where they obtained the equivalence of non-tangential and radial maximal function defined via the approximations to identity, using the auxiliary grand maximal function as a bridge. 

Here in this specific Bessel setting, we can obtain a direct proof by using the Moser inequality of the $\lz$-harmonic functions as we established in Theorem \ref{t-moser inequ}. 

\medskip
\begin{proof}[Proof of Theorem \ref{thm appl}]
Suppose $\lz>0$ and $f\in H^1_{\Delta_\lz,n-max}(\R_+,\dmz)$.  It is obvious 
that for every $x\in \R_+$
$$\mathcal{R}(f)(x)\leq  \mathcal{N}(f)(x),  $$
which implies that
$$ \|f\|_{H^1_{\Delta_\lz,max}(\R_+,\dmz)}\leq  \|f\|_{H^1_{\Delta_\lz,n-max}(\R_+,\dmz)},$$
i.e, we have $f\in H^1_{\Delta_\lz,max}(\R_+,\dmz)$.
Thus, we obtain that
$$  H^1_{\Delta_\lz,n-max}(\R_+,\dmz)\subset H^1_{\Delta_\lz,max}(\R_+,\dmz).  $$

Conversely, Suppose $f\in H^1_{\Delta_\lz,max}(\R_+,\dmz)$. Let
%\begin{equation*}
$u(t,\,x):=e^{-t\sqrt{\Delta_\lz}}(f)(x)=\plz f(x)$.
%\end{equation*}
Then $u(t,\,x)$ is $\lz$-harmonic, i.e., $u(t,\,x)$ satisfies \eqref{bessel laplace equation}.

For any $q\in(0, 1)$, for all $y,t\in\R_+$ with $|y-x|<t$,  from Theorem \ref{t-moser inequ} with $R:=t$, we deduce that
\begin{eqnarray*}
\lf| u(t,\,y)\r|^q
&&\ls  \dfrac1{\tilde m_\lz(B((t, x), 12t))}\int_{B((t,\, x),\,12t)}\lf|u(s,\,z)\r|^q\,z^{2\lz}\,dzds\\
&&\ls  \dfrac1{\tilde m_\lz(B((t, x), 12t))}\int_{B((t,\, x),\,12t)}\mathcal{R}(f)(z)^q\,z^{2\lz}\,dzds\\
&&\ls  \dfrac1{m_\lz(12I)}\int_{12I} \mathcal{R}(f)(z)^q\,z^{2\lz}\,dz\\
&&\ls \cm\lf(\mathcal{R}(f)^q\r)(x),
\end{eqnarray*}
where $I:=I(x, t)$ and $\cm$ is the Hardy--Littlewood maximal function.
 
 This implies that
\begin{equation}\label{nontang max contr by radia max poiwise}
 \mathcal{N}(f)(x)\ls \lf\{\cm\lf[\lf( \mathcal{R}(f)\r)^q\r](x)\r\}^\frac1q.
\end{equation}

 By taking the $L^1$ norm on both side of the inequality above and using the boundedness of the Hardy--Littlewood maximal function, we obtain that
\begin{equation*}
\big\|  \mathcal{N}(f)\big\|_{L^1(\R_+,\dmz(x))}\ls  \lf\|   \mathcal{R}(f)\r\|_{L^1(\R_+,\dmz(x))},  
\end{equation*}
which implies that
$$ \|f\|_{H^1_{\Delta_\lz,n-max}(\R_+,\dmz)}\ls  \|f\|_{H^1_{\Delta_\lz,max}(\R_+,\dmz)},$$
i.e, we have $f\in H^1_{\Delta_\lz,n-max}(\R_+,\dmz)$.
Thus, we obtain that
$$  H^1_{\Delta_\lz,max}(\R_+,\dmz)\subset H^1_{\Delta_\lz,n-max}(\R_+,\dmz).  $$

This completes the proof of Theorem \ref{thm appl}.
\end{proof}

\bigskip
\bigskip
{\bf Acknowledgments:}
X. T. Duong is supported by ARC DP 140100649. Z.H. Guo is supported by Monash University New Staff Grant.
J. Li is supported by ARC DP 160100153 and Macquarie University New Staff Grant.
D. Yang is supported by the NNSF of China (Grant No. 11571289) and the State Scholarship Fund of China (No. 201406315078).

\end{document}